\font\teneufm=eufm10 \font\seveneufm=eufm7
    \font\fiveeufm=eufm5
    \def\1{\mbox{\bf 1}}
    \newtheorem{theorem}{Theorem}[subsection]
    \newtheorem{lemma}[theorem]{Lemma}
    \theoremstyle{definition}
    \newtheorem{remark}[theorem]{Remark}
    \newtheorem{definition}[theorem]{Definition}
    \numberwithin{equation}{section}
    \def\2int{\mathop{2\int}\nolimits}
    \def\resp.{\mathop{\rm resp.}\nolimits}
    \font\math=cmmi10
    \def\varpi{\hbox{\math\char'44}}
    \def\pa{\S\kern.15em }
    \def\un{\uppercase\expandafter{\romannumeral 1}}
    \def\deux{\uppercase\expandafter{\romannumeral 2}}
    \def\trois{\uppercase\expandafter{\romannumeral 3}}
    \def\quatre{\uppercase\expandafter{\romannumeral 4}}
    \def\cinq{\uppercase\expandafter{\romannumeral 5}}
    \def\six{\uppercase\expandafter{\romannumeral 6}}
    \def\hfl#1#2#3{\smash{\mathop{\hbox to#3{\rightarrowfill}}\limits
    ^{\scriptstyle#1}_{\scriptstyle#2}}}
    \def\gfl#1#2#3{\smash{\mathop{\hbox to#3{\leftarrowfill}}\limits
    ^{\scriptstyle#1}_{\scriptstyle#2}}}
\begin{document}
    
   \title[Finite Unitary Rings with Non-Solvable Groups of Units]{Finite unitary rings all of whose groups of units of all their subrings except of the ring itself are solvable}

\author{Mohsen Amiri$^1$, Wilhelm Alexander Cardoso Steinmetz$^2$}

\subjclass[2010]{16U60, 16P10, 20D10, 20F16}
\keywords{Finite Ring, Group of Units, Solvable Group}%

    \maketitle

   \medskip

  \footnotesize{$^1$ \textsc{Departmento de Matem\'atica, Instituto de Ci\^encias Exatas}}\par\nopagebreak
  \footnotesize{ \textsc{Universidade Federal do Amazonas (UFAM), Manaus/AM, Brazil}}\par\nopagebreak
  \footnotesize{\textit{E-mail address:} \texttt{mohsen@ufam.edu.br}}

  \medskip

  \footnotesize{$^2$ \textsc{Departmento de Matem\'atica, Instituto de Ci\^encias Exatas}}\par\nopagebreak
  \footnotesize{\textsc{Universidade Federal de Minas Gerais (UFMG), Belo Horizonte/MG, Brazil}}\par\nopagebreak
  \footnotesize{\textit{E-mail address:} \texttt{wacs@ufmg.br}}
 
\normalsize

 \medskip
  \medskip


    \begin{abstract}
        
       Let R be a finite unitary ring whose group of units is not solvable but all groups of units of all its proper subrings are solvable. In this paper we classify these rings and show that all finite rings of order $p^n$ for $n < 5$ and some of order $p^6$ are in this class of rings.

    \end{abstract}

    \section[Introduction]{Introduction}
    One of the classical problems in algebra is the study and classification of certain algebraic structures subject to specific conditions. There are many studies that take this approach. For example, the class of all rings, all of whose subrings are ideals are studied and classified in \cite{Andrijanov}, the class of all rings having few zero divisors are classified in \cite{Corbas}, the class of all rings such that their additive endomorphisms are ring endomorphisms are classified in \cite{Birkenmeier} and the class of all division rings having finite cardinality are classified in \cite{Wedderburn}. More recently, the class of commutative reduced filial rings is studied in \cite{Andru}, the class of groups in which every element has a paracentralizer of finite index is studied in \cite{Falco}, the class of groups having Schur multiplier of maximum order is classified in \cite{Rai}, the class of all finite unitary rings in which all Sylow subgroups of the group of units are cyclic is characterized in \cite{Arian} and  the class of all finite unitary rings in which the group of units is a minimal non-nilpotent group is classified in \cite{Amini}.
    
    Minimal simple groups have been classified in \cite{Thomp} and in the present article we provide a similar classification for finite unitary rings, analyzing the simplicity of their groups of units and the groups of units of their subrings. More precisely, we characterize the structure of finite rings $R$ such that $R^*$ is not a solvable group but $S^*$ is a solvable group for every proper unitary subring $S$ of $R$. A similar classification for rings with minimal non-nilpotent groups of units has been done in \cite{Amini}. As an application of our main result we describe the structure of a finite ring $R$ of order $p^m$, $p$ a prime and $m\leq 6$, such that $R^*$ is not a solvable group.

   \section{Preliminaries}
     
     The cardinality of a given set $X$ is denoted by $|X|$. For $G$ a group we denote the order of an element $g\in G$ by $o(g)$. For a subset $S$ of a group $G$, the centralizer and normalizer of $S$ in $G$ are defined, respectively, by $C_G(S)=\{g \in G\,\, | \,\, gs = sg\,\, \mbox{for  all}\,\,  s\in S\}$ and $N_G(S)=\{g\in G \,\, | \,\, Sg=gS\}$. We use analogous definitions for centralizers and normalizers in rings. Unless specified otherwise, all rings are unitary. For a ring $R$ we denote $Jac(R)$ its Jacobson radical and $R^*$ its group of units.   The ring of $n\times n$ matrices over a ring $R$ is denoted by $M_n(R)$ and the ring of integers modulo $m$ is denoted by $\mathbb{Z}_m$. The prime subring of $R$ (the subring of $R$ generated ${1}$) is denoted by $R_0$. Let $S$ be a subset of $R$, then the subring generated by $S$ in $R$ is denoted by $R_0[S]$. We denote the characteristic of $R$ by $char(R)$ and $GF(p^m)$ denotes the unique finite field of characteristic $p$ and of order $p^m$. Let us now remind the reader of some results that will be important further on:
    
    \begin{theorem}\label{pr6} (Lemma (1.2) of \cite{Groza})
    Let $R$ be a ring of characteristic $p^r$, where $p$ is a prime number and $r$ is a positive integer. Then the multiplicative group $1 + J(R)$ is a $p$-group of bounded exponent.
    \end{theorem}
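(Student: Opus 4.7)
The plan is to combine two ingredients: (i) the nilpotence of $J(R)$, which holds whenever $R$ is finite (as is tacitly assumed throughout the paper), giving some integer $N$ with $J(R)^N = 0$; and (ii) the identity $p^r\cdot x = 0$ for every $x\in R$, which is built into the characteristic hypothesis. Together these will force a uniform power of $1+j$ to collapse to $1$ via a binomial expansion, with the exponent depending only on $r$ and on $N$.

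For $j \in J(R)$ and a positive integer $k$, I would expand
\[
(1+j)^{p^k} \;=\; 1 \,+\, \sum_{i=1}^{p^k} \binom{p^k}{i}\, j^i.
\]
Terms with $i \geq N$ vanish by the nilpotence of $J(R)$. For the remaining indices $1 \leq i \leq N-1$, Kummer's formula yields $v_p\!\left(\binom{p^k}{i}\right) = k - v_p(i) \geq k - \lfloor \log_p(N-1) \rfloor$. Choosing $k$ so that $k - \lfloor \log_p(N-1) \rfloor \geq r$ forces each such $\binom{p^k}{i}$ to be a multiple of $p^r$, and therefore to annihilate $j^i$ in $R$. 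Hence $(1+j)^{p^k} = 1$. Because the threshold $k$ depends only on $r$ and on the nilpotence index of $J(R)$, not on the particular $j$, the group $1 + J(R)$ has exponent dividing $p^k$, and in particular is a $p$-group of bounded exponent.

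The main subtle point I anticipate is ensuring that a single $k$ works uniformly for every $j \in J(R)$ simultaneously; this is precisely what forces the use of a \emph{global} nilpotence bound $N$ for $J(R)$ rather than relying on a separate nilpotence index for each element. Fortunately this global bound is automatic for finite (or more generally Artinian) rings, which is the setting in which the theorem is applied.
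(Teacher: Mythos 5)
Your argument is correct. The paper gives no proof of this statement --- it is simply quoted from Lemma 1.2 of \cite{Groza} --- so there is nothing internal to compare against; your binomial expansion, combining a global nilpotence bound $J(R)^N=0$ with the valuation identity $v_p\bigl(\tbinom{p^k}{i}\bigr)=k-v_p(i)$ to kill every term of $(1+j)^{p^k}-1$, is the standard argument and is complete. Your caveat is also exactly the right one: for an arbitrary ring of characteristic $p^r$ the statement fails (e.g.\ $1+t$ has infinite order in $1+J(\mathbb{F}_p[[t]])$), so the implicit Artinian/finiteness hypothesis that makes $J(R)$ nilpotent is genuinely needed --- and it holds in every application made in this paper.
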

    
    \begin{lemma}\label{d}
    Let $R$ be a finite local ring with a nontrivial minimal ideal $I$. Then  $Jac(R) \subseteq ann_R(I)$.
    \end{lemma}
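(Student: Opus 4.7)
The plan is to apply Nakayama's lemma combined with the minimality of $I$. Since $R$ is local, $J:=Jac(R)$ is the unique maximal (two-sided) ideal, and $R/J$ is a (skew) field. Because $J$ and $I$ are both two-sided ideals, the product $JI$ is again a two-sided ideal of $R$, and it is contained in $I$.

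First I would invoke the minimality of $I$: the chain $0 \subseteq JI \subseteq I$ of ideals of $R$ forces either $JI=0$ or $JI=I$. If $JI=0$ we are done (on the left), since this says exactly $J\subseteq \{r\in R:rI=0\}$. So the task reduces to excluding the possibility $JI=I$.

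Here is where Nakayama enters. Since $R$ is finite, $I$ is a finitely generated left $R$-module, and $J=Jac(R)$ is the Jacobson radical. By the standard form of Nakayama's lemma, $JI=I$ forces $I=0$, contradicting the assumption that $I$ is nontrivial. Thus $JI=0$, and by the symmetric argument applied to $IJ$ (which is also a two-sided ideal inside $I$, hence equals $0$ or $I$, the latter being ruled out by the right-handed Nakayama) we obtain $IJ=0$ as well, so $J\subseteq ann_R(I)$ regardless of whether $ann_R$ denotes the left, right, or two-sided annihilator.

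The only point that requires a little care is the applicability of Nakayama: we must know that $I$ is finitely generated as a one-sided $R$-module, which is automatic here because $R$ itself is finite. There is essentially no other obstacle; the argument is a one-line consequence of minimality plus Nakayama once the finiteness of $R$ and the locality of $R$ are brought to bear.
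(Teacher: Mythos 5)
Your argument is correct and is essentially identical to the paper's proof: minimality of $I$ forces $Jac(R)\,I$ to be $0$ or $I$, and Nakayama's lemma (applicable since $R$ is finite, hence $I$ is finitely generated) rules out the latter. The extra remark about the right-sided product $IJ$ and the finite generation hypothesis is fine but not needed beyond what the paper already does.
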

    \begin{proof}
    We have  $Jac(R) I \subseteq I$. Since $I$ is minimal,  either $Jac(R) I = 0$ or $Jac(R) I = I$. As $I\neq 0$, Nakayama's Lemma shows that $Jac(R) I\neq I$. Therefore $Jac(R) I = 0$.
    \end{proof}

    
    \begin{remark}
    Let $R$ be a finite (unitary) ring without any proper unitary subring. Then 
    $R=R_0\cong {\mathbb{Z}}_{n}$ ($n$ some positive integer) is a commutative ring.
     \end{remark}
    
    We now introduce the rings that will be of central interest in this work:

    \begin{definition}
    Let $R$ be a finite (unitary) ring such that $R^*$ is not a solvable group. We say that $R$ is a {\it minimal simple ring}, if $S^*$ is a solvable group for any proper unitary subring $S$ of $R$.
    \end{definition}
    
    \begin{definition} We call a (unitary) ring $R$ {\it purely minimal simple} whenever $R$ is a minimal simple ring of order $p^m$, for some integer $m$, such that $R$ is not a direct sum of two proper ideals.
    \end{definition}
    
    \section{Results}

    We can now turn to proving the results that will help us establish a classification of minimal simple rings. For this, consider the set:
    $$\Delta = \{R, R \bigoplus  {\mathbb{Z}}_{n} \mid n \geq 2 \mbox{ and } R \mbox{ is purely minimal simple} \}.$$
    
    \begin{theorem}
    Let $R$ be a finite minimal simple ring. Then  $R\in \Delta$.
    \end{theorem}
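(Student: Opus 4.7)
The plan is to iteratively decompose $R$ as a direct sum of two-sided ideals, using the minimal simplicity hypothesis to force every summand except one to collapse to a cyclic ring $\mathbb{Z}_n$. First I would establish the characteristic decomposition: writing $\text{char}(R) = n = p_1^{a_1}\cdots p_k^{a_k}$, the prime subring $R_0 \cong \mathbb{Z}_n$ is central in $R$, and the orthogonal idempotents coming from the Chinese remainder factorisation of $\mathbb{Z}_n$ yield $R = R_1 \oplus \cdots \oplus R_k$ as a direct sum of two-sided ideals with $|R_i|$ a power of $p_i$. Since $R^* = R_1^* \times \cdots \times R_k^*$ is non-solvable, at least one $R_i^*$ is non-solvable.

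The core step I would prove is the following: whenever $R = A \oplus B$ is a decomposition into two non-zero two-sided ideals with $A^*$ non-solvable, then $B$ coincides with its prime subring $B_0$, and hence $B \cong \mathbb{Z}_c$ by the Remark. Indeed, $A \oplus B_0$ is a unitary subring of $R$ (it contains $1_R = 1_A + 1_B$ because $1_B \in B_0$), and by the orthogonality $AB = BA = 0$ its group of units equals $A^* \times B_0^*$. Since $B_0^*$ is abelian, $A^*$ non-solvable would make $(A \oplus B_0)^*$ non-solvable, so this subring cannot be proper, forcing $B_0 = B$.

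Applying this observation with $A$ any $R_i$ whose unit group is non-solvable and $B$ the complementary sum shows that exactly one $R_i^*$ is non-solvable (say $R_1^*$) and that $R \cong R_1 \oplus \mathbb{Z}_m$ with $\gcd(m,p_1)=1$. I would then iterate on $R_1$: each time the non-solvable summand still splits as a direct sum of two proper two-sided ideals, the core step (applied to the induced decomposition of $R$) forces one piece to be a cyclic ring $\mathbb{Z}_{p_1^d}$, which merges with the abelian part via $\mathbb{Z}_{p_1^d} \oplus \mathbb{Z}_m \cong \mathbb{Z}_{p_1^d m}$. Finiteness of $|R|$ terminates the process at $R \cong T \oplus \mathbb{Z}_N$, where $T$ has prime power order, $T^*$ is non-solvable, and $T$ is not a direct sum of two proper ideals. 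A final verification that any proper unitary subring $S \subsetneq T$ has solvable unit group — because $S \oplus \mathbb{Z}_N$ is then a proper unitary subring of $R$ and $\mathbb{Z}_N^*$ is abelian — confirms that $T$ is itself minimal simple, hence purely minimal simple, placing $R$ in $\Delta$ (with $N=1$ yielding the first member of $\Delta$ and $N \geq 2$ the second).

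The main technical obstacle I anticipate is the justification of the core step: verifying carefully that $A \oplus B_0$ really is a unitary subring containing $1_R$, that its group of units is exactly $A^* \times B_0^*$ (which depends on the orthogonality $AB = BA = 0$ intrinsic to an ideal decomposition), and that the analogous bookkeeping survives each iteration when cyclic factors are absorbed via CRT. Once these points are in place, the rest reduces to routine manipulation of ideal decompositions.
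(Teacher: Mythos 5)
Your proposal is correct, and it is built on the same underlying idea as the paper's proof: decompose $R$ into ideals of prime-power order and observe that replacing any summand $B$ by its prime subring $B_0$ yields a proper unitary subring $A\oplus B_0$ with non-solvable unit group unless $B=B_0\cong\mathbb{Z}_c$ already. The organization differs, though, in a way worth noting. The paper argues by minimal counterexample: it shows the non-solvable prime-power component $R_1$ is itself minimal simple and invokes the induction hypothesis to place $R_1\in\Delta$, then recombines by CRT. You instead isolate a reusable ``core step'' ($R=A\oplus B$ with $A^*$ non-solvable forces $B=B_0$) and iterate it directly until the non-solvable summand no longer splits, with no induction on $|R|$. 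Your version is somewhat more self-contained: in particular, when $R$ itself has prime-power order ($k=1$) but still decomposes as a sum of two proper ideals, the paper's appeal to ``minimality of $R$'' gives nothing (since $R_1=R$ has the same order), whereas your iteration handles that case on the same footing as every other step. The points you flag as needing care --- that $1_R=1_A+1_B\in A\oplus B_0$, that orthogonality gives $(A\oplus B_0)^*=A^*\times B_0^*$, and that the cyclic factors of coprime orders merge by CRT --- are exactly the right ones, and all go through; the final check that the terminal summand $T$ is minimal simple (via $S\oplus\mathbb{Z}_N\subsetneq R$ for $S\subsetneq T$) is also needed and correctly supplied.
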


    \proof
    Suppose, for a contradiction, that $R$ is minimal simple of minimal order such that $R\not\in \Delta$. If $|R| \in \{p,p^2\}$, then $R$ is a commutative ring \cite[Lemma]{Eldridge}, so we may suppose that $|R| \notin \{p,p^2\}$. Then, if $S \subsetneq R$, as $R$ is minimal simple, $S^*$ is solvable.
    
    We can write $| R|=p_1^{\alpha_1}p_2^{\alpha_2}...p_k^{\alpha_k}$, where $p_1,...,p_k$ are distinct primes and $k \in \mathbb{N}$. Let us then write $R=R_1 \oplus R_2 \oplus ... \oplus R_k $, where $| R_i | = p_i^{\alpha_i}$ for all $i \in \{ 1,...,k\}$.
    
    We have $R^*=R_1^* \oplus R_2^* \oplus ... \oplus R_k^* $. If $R_i^*$ is solvable  for all $i$,  then $R^*$ is solvable, which is a contradiction. Hence we may assume that $R_1^*$  is not a solvable group.
    If $R_1$ has a proper unitary subring $A_1$ such that $A_1^*$ is not a solvable group, then $(A_1\bigoplus R_2\bigoplus ...\bigoplus R_{k})^*$ is not solvable, which is a contradiction.
    So $R_1$ is a minimal simple ring and by minimality of $R$, $R_1\in \Delta$. Suppose now that for some $i\in \{2,...,k\}$, $R_i$ has a proper unitary subring $B$. Let $$
    H=R_1\bigoplus R_2\bigoplus ...\bigoplus R_{i-1}\bigoplus B\bigoplus R_{i+1}\bigoplus ...\bigoplus R_k.
    $$
    Then $H^*$ would be a proper subgroup of $R^*$, and thus, by assumption, $H^*$ would be solvable. But then $R_1^*$ would also be solvable, which is a contradiction. Thus $R_i\cong \mathbb{Z}_{p_i^{\alpha_i}}$ for all $i\in \{ 2,...,k \}$.
    Since $R_1\in \Delta$, we have $R_1=S$ or  $R_1=S\oplus \mathbb{Z}_{p_1^m}$ where $S$ is a purely minimal simple ring and $m\geq 1$ is an integer. Let $d=|R|/|S|$. It follows from $gcd(p_1,p_2p_3...p_k)=1$ that $ \mathbb{Z}_{p_1^m}\oplus R_2\oplus...\oplus R_k\cong \mathbb{Z}_d$, so  $R\cong S\oplus \mathbb{Z}_d$. Hence $R\in \Delta$, which is a contradiction.
    \endproof
    
    Let now $\Gamma_1$ be the set of all rings $M_2(GF(q))$ such that $q>3$  is a prime number, $\Gamma_2$ be the set of all $M_2(GF(q^p))$ where $q=2,3$ and where $p$ is a prime number and $\Gamma_3=\{M_3(GF(2)),M_3(GF(3))\}$. Write $\Gamma=\Gamma_1\cup\Gamma_2\cup\Gamma_3$.
    
    Let us observe that every element of $\Gamma$ is a minimal simple ring. Suppose $R\in \Gamma$ and let $S$ be a proper subring of $R$. If $R\in \Gamma_1$, then $S$ is of cardinality $q^r$, $r\leq 3$. Thus $S^*$ is a solvable group \cite[Proposition]{Eldridge}. Now let $R\in \Gamma_2\cup \Gamma_3$. Then, by the Artin-Wedderburn structure theorem we have $r\in \{1,2\}$ such that $\frac{S}{Jac(S)}\cong M_r(K)$ for some intermediary field $GF(q)\leq K<GF(q^p)$ if $R\in \Gamma_2$ and $\frac{S}{Jac(S)}\cong M_r(GF(2))$ or $\frac{S}{Jac(S)}\cong M_r(GF(3))$ if $R\in \Gamma_3$. Since $p$ is prime, $K=GF(q)$ and thus in all cases $(\frac{S}{Jac(S)})^*$ is solvable. Since $\frac{S^*}{1+Jac(S)} \cong (\frac{S}{Jac(S)})^*$ and $1+Jac(S)$ is a $p$-group by Lemma \ref{pr6}, $S^*$ is solvable.
    
    To show the converse, we will prove two lemmas. In what follows, let $q=p^r$, where $p$ is a prime and $r\geq 1$. We remind the reader that a Singer cycle of $GL_n(q)$ is an element of order $q^n-1$. These elements were shown to exist in \cite{Singer}, as a result of the fact that the multiplicative group of the field $GF(q^n)$ is cyclic.
    
    \begin{lemma}\label{3211}
    Let $R=M_n(GF(q))\in \Gamma$ with $n=2,3$ and choose $y\in R$ such that $o(y)=q^n-1$ and $\langle y\rangle\cup \{0\}\cong GF(q^n)$. Then:
    \begin{enumerate}
        \item $N_{R^*}(\langle y \rangle) \cong \langle y \rangle \rtimes \langle \sigma \rangle$, where $\sigma$ is the Frobenius autormorphism of $GF(q^n)$ and  $|N_{R^*}(\langle y \rangle)|=n(q^n-1).$
        \item For any $w\in N_{R^*}(\langle y\rangle)\setminus C_{R^*}(y)$,  $(R_0[y,w])^*=(R_0[y^n,w])^*$ is not a solvable group.
        \item If $R=M_3(GF(3))$ and $H\leq R^{*}$ is such that $H\cong S_4$, then the subring generated by $H$ is $R$.
    \end{enumerate}
    
    \end{lemma}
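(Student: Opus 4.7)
The plan is to treat each part separately using the identification $R = M_n(GF(q)) \cong \End_{GF(q)}(GF(q^n))$ under which $\langle y\rangle\cup\{0\}$ acts as scalar multiplication by $GF(q^n)$; note $n\in\{2,3\}$ is prime in all cases under consideration. For (1), since this subfield is maximal commutative it is self-centralizing in $R^{*}$, so $C_{R^*}(y)=\langle y\rangle$. Any element of $N_{R^*}(\langle y\rangle)$ induces by conjugation a $GF(q)$-algebra automorphism of $GF(q^n)$, yielding an injection $N_{R^*}(\langle y\rangle)/\langle y\rangle\hookrightarrow\Gal(GF(q^n)/GF(q))=\langle\sigma\rangle$; surjectivity follows from the Noether--Skolem theorem, since every $GF(q)$-automorphism of the subfield extends to an inner automorphism of $M_n(GF(q))$. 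This gives the claimed semi-direct product structure and $|N_{R^*}(\langle y\rangle)|=n(q^n-1)$.

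For (2), I plan to show $R_0[y,w]=R_0[y^n,w]=R$, which forces the unit group to be $R^{*}$ and hence non-solvable. Since $y$ generates $GF(q^n)$ as an $R_0$-algebra, $R_0[y]=GF(q^n)$. For the key equality $R_0[y^n]=GF(q^n)$ one checks cases: whenever $\gcd(n,q^n-1)=1$ (which covers $n=3$ with $q\in\{2,3\}$ and $n=2$ with $q$ even) then $y^n$ still generates $\langle y\rangle$; in the remaining case $n=2$ with odd $q=p^r$, a direct numerical check shows that the order $(q^2-1)/2$ of $y^2$ does not divide $p^d-1$ for any proper divisor $d$ of $2r$, so $y^2$ lies in no proper subfield of $GF(q^2)$ and still $R_0[y^2]=GF(q^2)$. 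Hence $y\in R_0[y^n]$ and the two subrings coincide. Finally, writing $w$ from Part~(1) as a semi-linear map $v\mapsto c\sigma^j(v)$ with $\sigma^j$ a generator of $\langle\sigma\rangle$ (possible since $n$ is prime), Artin's theorem on linear independence of characters gives that $\bigoplus_{k=0}^{n-1}GF(q^n)\cdot w^k$ is a $GF(q)$-direct sum of dimension $n^2=\dim_{GF(q)}R$, closed under multiplication since $w^n$ acts as the scalar $N_{GF(q^n)/GF(q)}(c)\in GF(q)^{*}$. Hence this subring is all of $R$.

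For (3), let $H\leq GL_3(GF(3))$ with $H\cong S_4$, so $H$ acts faithfully on $V=GF(3)^3$. I would argue that $S_4$ admits no faithful $\mathbb{F}_3$-representation of dimension less than $3$: a $1$-dimensional faithful rep is impossible as $S_4$ is non-abelian, and $S_4$ does not embed in $GL_2(\mathbb{F}_3)$ because $GL_2(\mathbb{F}_3)$ is the non-split double cover of $S_4\cong PGL_2(\mathbb{F}_3)$. Consequently, in any reducible 3-dimensional $\mathbb{F}_3 S_4$-module, every irreducible composition factor has nontrivial kernel in $S_4$, which must contain the smallest nontrivial normal subgroup $V_4$. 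Thus $V_4$ acts trivially on the semisimplification of $V$, hence unipotently on $V$; since $|V_4|=4$ is coprime to $3$, this forces the action of $V_4$ on $V$ to be trivial, contradicting faithfulness. Hence $V$ is irreducible, and Schur's lemma gives $\End_{\mathbb{F}_3 S_4}(V)\cong GF(3^k)$ with $k\mid 3$; the case $k=3$ would make $H$ abelian, so $k=1$. Burnside's theorem then gives $R_0[H]=\mathbb{F}_3\text{-span}(H)=M_3(GF(3))=R$.

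The main obstacle will be the case analysis in Part~(2) verifying $R_0[y^n]=GF(q^n)$ in the tricky subcase $n=2$ with odd $q$, together with the representation-theoretic argument in Part~(3) showing that any $S_4$-subgroup of $GL_3(\mathbb{F}_3)$ must act irreducibly and absolutely irreducibly on $\mathbb{F}_3^3$; both require some bookkeeping but no deep new ideas beyond Noether--Skolem, Artin's independence, and Burnside's theorem.
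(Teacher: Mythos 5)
Your proof is correct, but it takes a genuinely different route from the paper's on all three parts, and in places it is more complete. For (1) the paper simply cites Huppert (Satz 7.3), whereas you rederive the statement via the identification $R\cong \End_{GF(q)}(GF(q^n))$, the double-centralizer computation $C_R(y)=GF(q^n)$, and Noether--Skolem; this is standard and sound (the Frobenius map $v\mapsto v^q$ itself furnishes the complement that splits the extension, a point worth making explicit). For (2) the paper argues separately: for $n=2$ a dimension count ($1,w$ independent over $GF(q^n)$ gives $|R_0[y,w]|\geq q^4=|R|$), and for $n=3$ a contradiction via elements of order $7$ or $13$ against the Artin--Wedderburn shape of a hypothetical proper subring; you instead show uniformly that $R_0[y,w]=\bigoplus_{k=0}^{n-1}GF(q^n)w^k=R$ using the semilinear description of $w$ and Artin's independence of characters. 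Notably, you also verify the assertion $\langle y\rangle\subseteq R_0[y^n,w]$ (by checking $y^n$ generates $GF(q^n)$ over the prime field in every case, the only delicate case being $n=2$ with $q$ odd), which the paper states without justification. For (3) the paper appeals to a GAP computation, while you give a self-contained argument: $S_4$ has no faithful $GF(3)$-representation of dimension at most $2$ (since $GL_2(GF(3))$ is a non-split central extension of $S_4$, or more simply since $SL_2(GF(3))$ has a unique involution), so any reducible action would kill the Klein four-group by the coprimality of $4$ and $3$, forcing irreducibility; Schur's lemma rules out endomorphism field $GF(27)$ by commutativity, and Burnside's theorem then yields $R_0[H]=M_3(GF(3))$. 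The trade-off: the paper's proof is shorter where it leans on citations and machine computation; yours is longer but uniform in $n$, self-contained, and closes the small gaps in the published argument.
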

    \begin{proof}
    For (1) see \cite[p.187, Satz 7.3]{Huppert}. We now prove (2). Observe that $\langle y\rangle \subset R_0[y^n,w]$, thus $(R_0[y,w])^*=(R_0[y^n,w])^*$. Since $R\in \Gamma$, $R^*$ is not a solvable group. Let $V:= R_0[y,w]$. Suppose that $c_1+c_2w=0$, for some $c_1,c_2\in F$.
      If $c_2\neq0$, then
    $w=(c_2)^{-1}c_1\in C_{R^*}(y)$, which is a contradiction. So $c_2=0$, and thus $c_1=0$.
    Hence $|V|\geq q^{2n}$. If $n=2$, then $V=R$, so $V^*=R^*$ is not a solvable group.
    
    Suppose now that $n=3$. Then $R=M_3(GF(q))$, $q=2,3$, and $|V|\geq q^6$. Moreover, as $o(y)=q^3-1$, we have $7 \mid V^*$ if $q=2$ and $26 \mid V^*$ if $q=3$. If $V=R$, the $V^*$ is non-solvable, so suppose $V\neq R$. Then $\frac{V}{Jac(V)}\in \{ M_2(GF(q)), M_2(GF(q)) \oplus A\}$, where $A$ is a direct sum of copies of $GF(q)$. But as $(\frac{V}{Jac(V)})^* \cong \frac{V^*}{1+Jac(V)}$, and since $1+Jac(V)$ is trivial or a $q$-group, $V^*$ cannot have an element of order $7$ or $13$, a contradiction.
    
    Finally we prove (3). Let $T=R_0[H]$. Clearly, $-1\in T$, and so 
    $C_2\times H\leq T^*$. Thus $T=R_0[C_2 \times H]=R$, according to a Gap computation. \end{proof}

    \begin{lemma}\label{jj}
    Let $R$ be a local minimal simple ring such that the $I:=Jac(R)$ is a minimal ideal of $R$.
    Suppose that $char(R)=p$ with $p$ prime. Then $\frac{R}{I}\not\in \Gamma$. 
    \end{lemma}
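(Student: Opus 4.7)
The plan is to argue by contradiction: assume $R/I \in \Gamma$ and produce a proper unitary subring of $R$ whose group of units is non-solvable, contradicting the minimal simplicity of $R$. Applying Lemma \ref{d} with $I = Jac(R)$ itself (a nontrivial minimal ideal) gives $I^{2}=0$. Under the contradiction hypothesis, $R/I\cong M_n(F)$ with $n\in\{2,3\}$ and $F$ a finite field of characteristic $p$; moreover $(R/I)^*$ is non-solvable because every element of $\Gamma$ is a minimal simple ring (noted right after the definition of $\Gamma$).

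First I would lift the matrix structure. In the finite ring $R$, the orthogonal idempotents $\overline{e}_{ii}\in R/I$ lift to orthogonal idempotents $e_1,\ldots,e_n\in R$ summing to $1$, and the off-diagonal matrix units $\overline{e}_{ij}$ can be lifted inductively to $e_{ij}\in e_iRe_j$ satisfying $e_{ij}e_{kl}=\delta_{jk}e_{il}$. The Peirce decomposition then identifies $R$ with $M_n(A)$, where $A:=e_1Re_1$ is a finite local ring with $A/J(A)\cong F$, $char(A)=p$, and $J(A)^2=0$.

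Next I would lift the coefficient field $F$ into $A$. Since $|F^*|$ is coprime to $p$ while $1+J(A)$ is a $p$-group by Theorem \ref{pr6}, Schur--Zassenhaus produces an element $u\in A^*$ of order $|F^*|$ lifting a generator of $F^*$. Factoring $x^{|F^*|}-1=\mu(x)g(x)$ over $\mathbb{F}_p$, where $\mu$ is the minimal polynomial of $\overline{u}$ (so $\mu$ and $g$ are coprime and hence $g(\overline{u})\in F^*$), one finds $g(u)\in A^*$, which together with $\mu(u)g(u)=0$ forces $\mu(u)=0$; this step makes essential use of $J(A)^2=0$. Hence $\mathbb{F}_p[u]\subseteq A$ is a subfield isomorphic to $F$, and $S:=M_n(\mathbb{F}_p[u])\subseteq M_n(A)=R$ is a unitary subring isomorphic to $R/I$.

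Finally, because $I\neq 0$ we have $J(A)\neq 0$, so $F\subsetneq A$, giving $S\subsetneq R$ properly. But $S^*\cong(R/I)^*$ is non-solvable, which contradicts the minimal simplicity of $R$. The main technical hurdle is the construction of the embedding $S\hookrightarrow R$, which amounts to an elementary Wedderburn--Malcev-type section: we have to arrange compatible lifts of both the matrix units and of the coefficient field. The hypothesis $char(R)=p$ is essential, since it guarantees both that $|F^*|$ is coprime to $p$ (so that Schur--Zassenhaus and idempotent lifting yield the required units and matrix units) and that the Teichm\"uller-type identity $\mu(u)=0$ closes up on the nose modulo $J(A)^2=0$.
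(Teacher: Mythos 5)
Your argument is correct, but it takes a genuinely different route from the paper's. The paper proceeds by a case-by-case counting contradiction: for $R/I\cong M_n(GF(q))\in\Gamma$ it lifts a Singer cycle $y$ and a normalizing element $w$ to $R$, uses Lemma \ref{3211}(2) and minimal simplicity to force $R=R_0[y,w]$, bounds $|R_0[y,w]|$ above by writing it as $\sum_i GF(p)w^i\langle y\rangle$, and bounds $|I|$ below via the orbit $\{0,a,ya,\dots\}$ of a nonzero $a\in I$ (of size $q^n$ because $Ia=0$ by Lemma \ref{d} and $y$ is a unit mod $I$); the two bounds clash in each case. You instead exhibit directly a proper unitary subring isomorphic to $R/I$: lift the matrix units modulo the nilpotent ideal $I$ to identify $R$ with $M_n(A)$ for a finite local ring $A$ of characteristic $p$ with $A/J(A)\cong F$, lift a generator of $F^*$ to $u\in A^*$ of order $|F^*|$ by Schur--Zassenhaus against the normal $p$-group $1+J(A)$ (Theorem \ref{pr6}), and verify $\mu(u)=0$ via the coprime factorization of $x^{|F^*|}-1$, so that $\mathbb{F}_p[u]\cong F$ and $M_n(\mathbb{F}_p[u])$ is a proper unitary subring with non-solvable unit group --- contradicting minimal simplicity. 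This is in effect the Wedderburn--Malcev theorem for the finite-dimensional $\mathbb{F}_p$-algebra $R$ (whose semisimple quotient is separable since $\mathbb{F}_p$ is perfect), and it buys uniformity: no case split on $n$, $p$, $q$, no order estimates, and no GAP computations; it also makes transparent that the minimality of $I$ is only needed to know $I=Jac(R)$ is nilpotent. The paper's approach is more hands-on and avoids invoking the lifting machinery, at the cost of length and delicacy in the estimates. Two small quibbles: the identity $\mu(u)=0$ already follows from $o(u)=|F^*|$ and $A$ being local, so $J(A)^2=0$ is not actually essential there, contrary to your remark; and you should say a word on why $\mathbb{F}_p[u]$ is a field of the right size (its surjection onto $F=\mathbb{F}_p[\overline{u}]$ is injective because $\deg\mu=[F:\mathbb{F}_p]$, or because a local ring has no nontrivial idempotents while $\mathbb{F}_p[x]/(\mu)$ is a field).
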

    
    \begin{proof}
    Suppose for a contradiction that $\frac{R}{I}\in \Gamma$. Then  $\frac{R}{I}\cong M_n(GF(q))$ where $M_n(GF(q))\in \Gamma$, so $n\in\{2,3\}$ and $q=p^r$, for some $r\geq 1$. 
    Let $y+I\in \frac{R}{I}$ be such that $\langle y+I \rangle\cup \{0\}\cong GF(q^n)$. By lemma \ref{3211}, there exists $w+I\in N_{(\frac{R}{I})^*}(\langle y+I\rangle)\setminus C_{(\frac{R}{I})^*}(\langle y+I\rangle)$ such that $o(w+I)=n$.
    We consider the following cases:
    
    {\bf Case 1.}
    First suppose that $n=2$ and $p,q>2$. Clearly, $2(q^2-1)\mid \langle y+I\rangle\langle w+I\rangle$. There exists $c\in I$ such that $w^2=1+c$. Then $o(w)\mid 2 q$. Let $z\in \langle w\rangle$ be of order $2$. Therefore $z+I\in N_{(\frac{R}{I})^*}(\langle y+I\rangle)\setminus C_{(\frac{R}{I})^*}(\langle y+I\rangle)$. Replacing $w$ by $z$ if necessary, we may assume that $o(w)=2$. We have $w+I=w(1+I)$ and $y+I=y(1+I)$. Since $gcd(q^2-1,q)=1$, we may assume that $o(y)=q^2-1$. 
    
    Let $K:=((1+I)\rtimes \langle y\rangle)\langle w\rangle$. Since $\frac{K}{1+I}$ is solvable group, and $1+I$ is a $p$-group, $K$ is a solvable group.
    Since $2 (q^2-1)|I|\mid |K|$, and $K$ is a solvable group, $K$ has  Hall subgroup $L$  such that $2(q^2-1)$ and $|L|$ have the same prime divisors.
    We may assume that $y\in L$. Let $z\in L$ of order two  such that $z+I=w+I$. 
    Since  $y^z\in L\cap (1+I)\langle y\rangle=\langle y\rangle$,
    we deduce that $z\in N_{R^*}(\langle y\rangle)\setminus C_{R^*}(y)$.
    Therefore  $zy=y^jz$, for some positive integer $j$. We may assume that $z=w$.
    Let $S:=R_0[y,w]$, and let $Y=\langle y\rangle$.  Then $S=\{\Sigma m_i y^i+\Sigma n_j wy^j \mid m_i,n_j\in GF(p) \mbox{ for } i,j=1,...,q^2-1 \}=GF(p)Y+GF(p)wY$.
    Hence $|S|\leq  p^2(q^{2}-1)^2$.
    From Lemma \ref{3211}, $S=R$, as $R$ is minimal simple. 
    Let $a\in I\setminus\{0\}$. Therefore $\{0,a,ya,...,y^{q^{2}-1}a\}$ is a subset of $I$ of size $q^{2}$. 
    Then $|R|\geq q^{4}q^2>p^2(q^{2}-1)^2$, which is a contradiction.

    {\bf Case 2.} Now suppose that $n=2$ and $p=2$, so $q=2^m$ for some integer $m>1$. We have $y+I=(u+I)(v+I)$, where $o(u+I)=q+1$ and $v+I\in Z(R/I)$. Let $G=PSL(2,q)$ and $h$ be an element of $G$ of order $q+1$. From Theorem 2.1 (3) of \cite{Giudici}, $N_G(\langle h\rangle)$ is isomorphic to the dihedral group $D_{2(q+1)}$.  Hence $\frac{\langle w+I,u+I\rangle}{Z(R/I)}$ is a dihedral group. So $u^w=u^{-1}+s$  for some  $s\in I$. Then $(u^w+u^{-1}c)^2=s^2=0$, and so
    $(u^2)^w+u^{-2}c^2=0$. It follows that $(u^2)^w=-u^{-2}c^2=u^{-2}c^2$. Let $S':=R_0[u^2,w]=R_0[y^2,w]$. By Lemma \ref{3211}, $S'=R$, $R$ being minimal simple. Since $(u^2)^{w^2}=u^2$ and $v+I\in Z(R/I)$,  we have $(y^2)^{w^2}=y^2$, so
    $w^2\in Z(R^*)$.
    
    Then $w^3y^2=wy^2w^2=(wy^2w)w=y^{-2}w=wy^2$.
    Hence $(w^3-w)y^2=0$, and so $w^2-1=0$. It follows that $o(w)=2$.
    Clearly, $|R|=|I|q^4$. Let $a\in I\setminus \{0\}$.
    Let  $Z:=\langle y^2\rangle $. We have $S=GF(2)Z+GF(2)wZ$. Then $|R|=|I|q^4\leq 2^2 (q-1)^2.$
    It follows that $|I|=2$. 
    But $\{0,a,ya,...,y^{q-2}a\}$ is a subset of $I$ of size $q>2$, which is a contradiction.
    
    {\bf Case 3.}
    If $n=3$, then $r=1$ and $p=q\in\{2,3\}$. First suppose that $p=2$.
    Then $o(w+I)=3$ and $o(y+I)=2^3-1=7$.
    By a similar argument to the above, we may assume that $o(w)=3$ and $o(y)=7$. 
    Let  $Y=\langle y\rangle$. We have $w^{-1}Yw=Y$, so $Yw=wY$. Then $S=GF(2)Y+GF(2)wY+GF(2)w^2Y$ is a unitary subring of $R$. Then $|S|\leq 2^3\cdot 7^3$. 
    From Lemma \ref{3211}, $S=R$. 
    Let $a\in I\setminus\{0\}$. Then $\{0,a,ya,...,y^{6}a\}$ is a subset of $I$ of size $8$. It follows that $|R|= |I|\cdot |M_3(GF(p))|\geq 2^3\cdot 2^9>2^3\cdot 7^3$, which is a contradiction.
    
    So suppose that $p=3$. First suppose that $o(w)=3$. Since $gcd(3,26)=1$, we may assume that $o(y)=26$.  Let $Y=\langle y\rangle$ and let $S=R_0[y,w]$, as above, then $S=GF(3)Y+GF(3)wY+GF(3)w^2Y$ is a unitary subring of $R$.  From Lemma \ref{3211}, $S=R$.  Then $|R|\leq 3^3\cdot 26^3$. 
    Let $a\in I\setminus\{0\}$. Then $\{0,a,ya,...,y^{25}a\}$ is a subset of $I$ of size $27$. It follows that $|R|\geq 3^3\cdot 3^9>3^3\cdot 26^3$, which is a contradiction.
    
    So suppose that $o(w)=9$. Then $w^3-1\in I$. According to a Gap computation $|Z(\langle w\rangle \langle y^2\rangle)|=3$ . Since, from Lemma \ref{3211}, $R=R_0[w,y]$, we have $w^3\in Z(R)$.
     Let $a=w^3-1$. Since $a\in Z(R)$, $ann_R(a)$ is a two sided ideal.
     So $ann_R(a)=I$. Let $u,v\in R$ such that $u+I\neq v+I$.
     If $ua=va$, then $u-v\in ann_R(a)=I$, and so $u+I=v+I$, which is a contradiction.
     It follows that $|I|\geq |M_3(GF(3))|\geq 3^9$, and so $|R|\geq 3^9\cdot 3^9=3^{18}$.
    
    Let $H\leq R^*$ such that $\frac{H+I}{I}\cong S_4$. Let $a\in I\setminus\{0\}$. It follows that $\{0,a,ya,...,y^{25}a\}$ is a subset of $I$ of size $27$. Then $|I|\geq 27=3^3$ and since $|1+I|\mid |R^*|$, it follows that $3^3\mid |R^*|$. Since $gcd(3^3,|S_4|)=3$, we have $3^2\cdot |S_4|\mid |R^*|$, and hence $|H|\mid 3^2 \cdot |S_4|$. We have $S_4=((C_2\times C_2)\rtimes C_3)\rtimes C_2$. Let $b\in H$ be of order two and let $W\leq H$ such that $H=W\rtimes \langle b\rangle$. Let $S=R_0[H]=GF(3)W+GF(3)bW$. By Lemma \ref{3211},
    $S=R$. But $|R|\leq 3^2\cdot 3^4 \cdot 12^2 =3^{8}\cdot 2^4<3^{18}$, which is a contradiction. \end{proof}
    
    In the following theorem we classify all finite purely minimal simple unitary rings.
    
    \begin{theorem}\label{1}
    The finite ring $R$ is purely minimal simple if and only if $R\in \Gamma$.
    
    \end{theorem}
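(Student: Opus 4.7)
The ``if'' direction has been verified in the paragraph preceding the theorem statement, so I focus on the converse: assuming $R$ is purely minimal simple, I aim to show $R\in\Gamma$.

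The first step is to pin down the structure of $\bar R:=R/J$ where $J=Jac(R)$. Since $|R|$ is a prime power and $R$ is not a direct sum of two proper ideals, $R$ admits no nontrivial central idempotent; because central idempotents in a finite ring lift across the nilpotent radical, $\bar R$ also has only trivial central idempotents, and Wedderburn--Artin forces $\bar R\cong M_n(GF(q))$ with $q=p^r$. By Theorem \ref{pr6}, $1+J$ is a $p$-group and $R^*/(1+J)\cong GL_n(q)$; since quotienting by a $p$-subgroup preserves nonabelian simple composition factors, $GL_n(q)$ is nonsolvable, forcing $n\ge 2$ and $(n,q)\notin\{(2,2),(2,3)\}$.

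Next I plan to prove $\bar R\in\Gamma$. Otherwise, a case-by-case inspection on $(n,q)$ produces a proper matrix subring $\bar S\subsetneq\bar R$ --- over a subfield of $GF(q)$ or of smaller matrix rank, e.g.\ $M_2(GF(p^s))$ with $1\le s<r$ and $p^s\ge 4$, or $M_3(GF(p))$ with $p\in\{2,3\}$ and $n\ge 4$, or $M_2(GF(p))$ with $p>3$ and $n\ge 3$ --- whose group of units is already nonsolvable. Its preimage under $R\to\bar R$ is a proper unitary subring of $R$ with nonsolvable group of units (since its units surject onto $\bar S^*$ modulo the $p$-group $1+J$), contradicting minimal simplicity. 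If $J=0$ this completes the proof, since then $R=\bar R\in\Gamma$.

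The hard case is $J\ne 0$, and my plan is to reduce to Lemma \ref{jj}. I would first arrange $char(R)=p$ by passing to $R/pR$ if necessary --- this preserves $\bar R$, preserves nonsolvability of the group of units (the kernel $1+pR$ is a $p$-group), and preserves both indecomposability (central idempotents of $R/pR$ lift to central idempotents of $R$ since $pR$ is nilpotent) and minimal simplicity (proper unitary subrings of $R/pR$ pull back to proper unitary subrings of $R$); then I would reduce to $J$ being a minimal two-sided ideal by passing to $R/I$ for $I$ a maximal proper sub-ideal of $J$, by the same kind of argument. Once both reductions are in place, Lemma \ref{jj} applies and forces $\bar R\notin\Gamma$, contradicting the preceding paragraph. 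The main obstacle is verifying carefully that each of the three defining conditions of ``purely minimal simple'' survives the two quotienting reductions; Lemma \ref{d} will then be useful in relating elements of the radical to the minimal ideal during the size estimates that fuel Lemma \ref{jj}.
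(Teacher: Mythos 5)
Your overall strategy is sound and runs parallel to the paper's: show $R/J\cong M_n(GF(q))$ with $GL_n(q)$ nonsolvable, force $R/J\in\Gamma$ by exhibiting proper subrings with nonsolvable units otherwise (your block/subfield subrings are exactly the paper's $\Theta$-type construction, applied to $R/J$ rather than to a semisimple $R$), and then kill the case $J\neq 0$ via Lemma \ref{jj}. Two of your reductions genuinely differ from the paper's and are arguably cleaner: you get simplicity of $R/J$ by lifting central idempotents through the nilpotent radical, where the paper argues via the maximal ideals $M_1,\dots,M_k$ and an explicit Peirce decomposition $R=\mathrm{ann}_R(e)\oplus Re$; and you reach the hypotheses of Lemma \ref{jj} by quotienting by a maximal proper sub-ideal of $J$, where the paper runs a minimal-counterexample induction, quotients by a minimal ideal $I\subseteq Jac(R)^{m-1}$, and concludes $I=Jac(R)$ from $R/I\in\Gamma$. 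The routine verifications you defer (that units of a subring surject onto units of its image modulo a nilpotent ideal with $p$-group kernel, that proper subrings pull back to proper subrings) all go through.

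The genuine gap is the characteristic reduction, and it is not a bookkeeping issue. Passing to $R/pR$ fails to set up Lemma \ref{jj} precisely when $pR=J\neq 0$: in that case $R/pR\cong R/J\in\Gamma$ is semisimple, your second reduction (to $J$ minimal) becomes vacuous, and Lemma \ref{jj} yields no contradiction. This case is not hypothetical. Take $R=M_2(\mathbb{Z}_{p^2})$ with $p>3$: it is indecomposable of order $p^8$ with nonsolvable unit group, and any proper unitary subring $S$ reduces mod $p$ to a \emph{proper} unitary subring of $M_2(GF(p))$ (if $S+pR=R$ then $pR=pS\subseteq S$, whence $S=R$), hence has solvable unit group by the minimal simplicity of $M_2(GF(p))$ --- so $R$ satisfies every hypothesis of ``purely minimal simple'' yet lies outside $\Gamma$. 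You should be aware that the paper's own proof also invokes Lemma \ref{jj} without securing $char(R)=p$ (its setup only gives $pR\subseteq I$ with $I$ minimal, leaving $pR=I$ open), so this hypothesis is genuinely the crux rather than a formality; as written, neither your plan nor the paper's argument disposes of the case $char(R)=p^2$ with $J=pR$, and any complete proof must either exclude it by a new argument or modify the statement.
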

    
    \proof
    Let $R$ be a purely minimal simple finite ring. We will show that $R\in \Gamma$. Suppose that $R$ is of minimal cardinality. If $| R | \in \{p,p^2\}$, $p$ a prime, then $R$ is a commutative ring, so suppose that $| R | = p^m$, with $m\geq 3$. We have the following two cases:
    
    {\bf Case 1.}
    Suppose that $Jac(R)=0$.  Then by the Artin-Wedderburn structure theorem, $R\simeq \bigoplus_{i=1}^t M_{n_i}(D_i)$, where $D_i$ is a finite field. Since $R$ is a purely minimal simple ring, we have $t=1$.  So $R\simeq M_{n}(D)$ and since $R$ is not commutative, we have $n>1$.

    Let $\Theta$ be the set of all matrices $\left(
      \begin{array}{cc}
    A & B \\
    0&d\\
      \end{array}
    \right)\in M_n(D)$  where $A\in M_{n-1}(D)$, $B\in M_{(n-1)\times 1}(D)$ and 
    $d\in D$.
    Let $x=\left(
      \begin{array}{cc}
    A_1& B_1 \\
    0&d_1\\
      \end{array}
    \right),\,\,\,\,\, y=\left(
      \begin{array}{cc}
    A_2& B_2\\
    0&d_2\\
      \end{array}
    \right)\in \Theta$.
    Then $xy=\left(
      \begin{array}{cc}
    A_1A_2& B_3\\
    0&d_1d_2\\
      \end{array}
    \right)\in \Theta$, some $B_3 \in M_{(n-1)\times 1}(D)$.
    Hence $\Theta$ is a unitary subring of $M_n(D)$.
    Let $\beta$  be the set of all $\left(
      \begin{array}{cc}
    A & 0 \\
    0&1\\
      \end{array}
    \right)\in \Theta$. Then  $A\in M_{n-1}(D)^{*}$ if and only if 
    $\left(
      \begin{array}{cc}
    A & 0 \\
    0&1\\
      \end{array}
    \right)\in \Theta^*$.
    Thus $\Theta^*$ has a subgroup $\beta^*$ which is isomorphic to
    $M_{n-1}(D)^*$.
    
    Suppose $n\geq 4$ or suppose $n=3$ and $|D|>3$. Then $\Theta^*$ is not a solvable subgroup of $R^*=M_n(D)^*$, which is a contradiction, $R$ a minimal simple ring.
    
    Thus assume $n=3$ and $|D|\leq 3$. If $|D|=2$, then $R=M_3(GF(2))\in \Gamma$. If $|D|=3$, then $R=M_3(GF(3))\in \Gamma$.
    
    Suppose now $n=2$. Write $|D| = p^r$, $p$ a prime and $r$ a positive integer. If $r=1$, then clearly $p>3$, for otherwise $R^*$ would be solvable. Then $R\in \Gamma$. So suppose $r\neq 1$. Then $U=M_2(GF(p))$ is a proper unitary subring of $R=M_2(D)$.
    If $p>3$, then $U^*$ is not solvable, which is a contradiction. So $p\in \{2,3\}$. Suppose for a contradiction that $M_n(D)\not \in \Gamma$. Then $|D|\neq p^q$ for some prime $q$. Therefore $|D|=p^{rs}$ where $r,s\geq 2$. Then $D$ has a subfield $K$ of order $p^s$.
    So $M_2(K)$ is a proper unitary subring of $R$, which is a contradiction, as $M_2(K)^*$ is not a solvable group.
    
    {\bf Case 2.}
    Suppose now that $Jac(R)\neq 0$. 
    Let $\{M_1,...,M_k\}$ be the set of all maximal ideals of $R$. First suppose that $k>1$. Since $\frac{R}{Jac(R)}=\frac{R}{M_1\cap ...\cap M_k}\cong \frac{R}{M_1}\times ....\times \frac{R}{M_k}$, we have $(\frac{R}{Jac(R)})^*\cong (\frac{R}{M_1})^*\times ....\times (\frac{R}{M_k})^*$.
      If $(\frac{R}{M_i})^*$ is  a solvable group for all $i=1,2,...,k$, then $(\frac{R}{Jac(R)})^*$ is a solvable group.
      Since $1+Jac(R)$ is a $p$-group and $(\frac{R}{Jac(R)})^*\cong\frac{R^*+Jac(R)}{Jac(R)}$, we have $R^*$ is a solvable group, which is a contradiction. Hence, we may assume that $(\frac{R}{M_1})^*$ is not a solvable group.

       If $R/M_j\ncong GF(p)$ for some $1<j\leq k$, then $R/M_1\times (R/M_2)_0\times ...\times (R/M_k)_0$ is a proper subring of $\frac{R}{Jac(R)}$. Let $f:\frac{R}{Jac(R)}\cong \frac{R}{M_1}\times ....\times \frac{R}{M_k}$. Then $f^{-1}(R/M_1\times (R/M_2)_0\times ...\times (R/M_k)_0)$ is a proper unitary subring of $\frac{R}{Jac(R)}$ with non-solvable group of units (as $(\frac{R}{M_1})^*$ is not solvable by assumption), which is a contradiction. Hence $|R/M_j|=p$ for all $j=2,3,...,k$.
    Let $j>1$. Since $R/M_j$ is a simple $R$-module, there exists $e\in R$ such that $R/M_j=R/ann_R(e)$. Then  $M_j=ann_R(e)$ is two sided ideal. Clearly, we may assume that $e+M_j$ is the identity element of $R/M_j$.
    Since $ann_R(e)$ is a two side ideal, we have   $Re=\{0,e,2e,...,(p-1)e\}=eR$, and so 
     $Re$ is a two sided ideal. 
    Since $e\not\in Jac(R)$, there exists integer $n>1$ such that $e^n=e$.
    Let $ue\in ann_R(e)\cap Re$. Since $ann_R(e)\subseteq ann_R(e^{n-1})$, we have  
    $0=(ue)e^{n-1}=ue^n=ue$. So 
    $ann_R(e)\cap R_e=0$.
    Consequently, 
    $R= ann_R(e)\oplus Re$, which is a contradiction. So
    $k=1$, and so $R$ is a local ring.  
    Since $R$ is a finite ring there exists a positive integer $m$ such that 
    $Jac(R)^{m-1}\neq 0$ but $Jac(R)^m=0$.
    Let $I\subseteq Jac(R)^{m-1}$ be a minimal ideal of $R$.
    Clearly, $Jac(R)\subseteq ann_R(I)=\{x\in R: xa=0, \  \textit {for all }a \in I\}.$ Since $I^2=0$, $1+I$ is an elementary abelian $p$-group.
    We observe that $\frac{R}{I}$ does not have a proper subring $\frac{T}{I}$
    such that $(\frac{T}{I})^*$ is not a solvable group, for otherwise $R$ would have a proper unitary subring $T$ such that $T^*$ is not solvable, which is impossible.
    If $\frac{R}{I}=\frac{A}{I}\oplus \frac{B}{I}$, then $A\cup B\subseteq Jac(R)$, which is a contradiction, because $R$ is a local ring. 
    So $\frac{R}{I}\in \Gamma$, and then $I=Jac(R)$.
    But from   Lemma \ref{jj}, we have a contradiction.
    \endproof
    
    \begin{theorem}\label{2}
    
    Let $R$ be a finite ring of order $p^m$ where $p$ is a prime.
    
    (i) If $m\leq 5$ and $R^*$ is not a solvable group, then $R$ is a minimal simple ring.
    
    (ii) If $m=6$ and $R^*$ is not a solvable group, then $R$ is a minimal simple ring or $Jac(R)$ is the unique ideal of $R$ of size $p^2$ and $R$ has a subring $S\cong M_2(GF(p))$ such that $R=S\oplus_S Jac(R)$ as $S$-module.
    Also, $Jac(R)$ is a simple $S$-module.
    
    \end{theorem}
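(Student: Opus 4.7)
\medskip

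The plan is to extract from $R$ a minimal simple proper subring and exploit the tight classification in $\Delta$ established earlier. Specifically, among unital subrings of $R$ whose unit group is non-solvable, I choose $T$ of smallest cardinality; by minimality $T$ is itself minimal simple, so $T\in\Delta$ by the (unlabelled) classification theorem opening Section~3. Since $|T|$ divides $|R|=p^m$ it is a $p$-power, and the enumeration of $\Delta$-elements of $p$-power order (combined with Theorem~\ref{1}) gives $|T|\geq p^4$, with the only options for $|T|\leq p^6$ being $|T|=p^4$ with $T\cong M_2(GF(p))$ ($p\geq 5$, from $\Gamma_1$), $|T|=p^5$ with $T\cong M_2(GF(p))\oplus\mathbb{Z}_p$, and $|T|=p^6$ with $T\cong M_2(GF(p))\oplus\mathbb{Z}_{p^2}$.

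For (i), when $m\leq 5$ the candidate sizes reduce to $\{p^4,p^5\}$. If $|T|=p^5$ then $|T|=|R|$, so $T=R$ and $R$ is minimal simple. If $|T|=p^4$ with $T\subsetneq R$, the semisimplicity of $T=M_2(GF(p))$ yields a left $T$-module splitting $R=T\oplus M$, and any nonzero $M$ must have order divisible by $p^2$ (the cardinality of the natural simple $T$-module), forcing $|R|\geq p^6$ and contradicting $m\leq 5$.

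For (ii), assume $m=6$ and $R$ is not minimal simple, so $T\subsetneq R$. In the case $|T|=p^5$, the two orthogonal central idempotents $e,f$ of $T\cong M_2(GF(p))\oplus\mathbb{Z}_p$ (summing to $1_R$) give a Peirce decomposition $R=eRe\oplus eRf\oplus fRe\oplus fRf$ of total size $p^6$ with $|eRe|\geq p^4$ and $|fRf|\geq p$; the off-diagonals are left/right $M_2(GF(p))$-modules, so their cardinalities are either $1$ or at least $p^2$, and part (i) applied to $eRe$ excludes $|eRe|=p^5$. This forces $R=M_2(GF(p))\oplus A$ as a direct sum of rings with $|A|=p^2$, and a characteristic/minimality check singles out $A\cong\mathbb{Z}_{p^2}$, in which case $R$ is itself minimal simple. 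In the case $|T|=p^4$, so $T\cong M_2(GF(p))$ and $char(R)=p$, the simplicity of $T$ gives $T\cap Jac(R)=0$, embedding $T$ into $R/Jac(R)$ and bounding $|Jac(R)|\leq p^2$. I would rule out $|Jac(R)|\in\{0,p\}$ by lifting the central primitive idempotents of the semisimple ring $R/Jac(R)\supsetneq T$ to orthogonal idempotents of $R$ and rerunning the Peirce--module analysis of the previous subcase: each branch either exhibits an order-$p^5$ unital subring of $R$ properly containing $M_2(GF(p))$ (contradicting (i)) or produces a proper minimal simple subring of $R$ of order smaller than $|T|$, contradicting the minimality of $T$. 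Thus $|Jac(R)|=p^2$ and $R/Jac(R)\cong T$, and Wedderburn--Malcev applied to the finite $GF(p)$-algebra $R$ supplies a subring $S\cong M_2(GF(p))$ with $R=S\oplus Jac(R)$ as $S$-module; the equality $|Jac(R)|=p^2$ makes $Jac(R)$ a simple $S$-module (the natural one), and uniqueness among order-$p^2$ ideals follows because any such ideal $I$ satisfies $I+Jac(R)\in\{Jac(R),R\}$ (as $R/Jac(R)$ is simple), with $I+Jac(R)=R$ ruled out by $|I+Jac(R)|=|I|\,|Jac(R)|/|I\cap Jac(R)|$.

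The main obstacle is the case $|T|=p^4$ with $|Jac(R)|\in\{0,p\}$: the lifted-idempotent Peirce analysis there must be combined carefully with the $M_2(GF(p))$-module size bounds and with the minimality of $T$ to reach a contradiction in every branch; a secondary subtlety is to verify Wedderburn--Malcev in the needed form for a finite $GF(p)$-algebra whose semisimple quotient is a matrix ring over $GF(p)$.
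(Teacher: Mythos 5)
Your strategy is genuinely different from the paper's: instead of working with a minimal ideal $I\subseteq Jac(R)$ and an element of multiplicative order $p^2-1$ acting on it to force $|I|\geq p^2$ (which is what the paper does), you extract a minimal-cardinality unital subring $T$ with non-solvable unit group, identify it through the $\Delta$/$\Gamma$ classification, and then argue with unital $M_2(GF(p))$-modules and Peirce decompositions. For part (i) this works and is arguably cleaner than the paper's count: a unital copy of $M_2(GF(p))$ makes $R$ a unital module over a simple ring whose unique simple module has order $p^2$, so $|R|$ would have to be a power of $p^2$, which kills $m=5$ immediately; your enumeration of the $p$-power-order members of $\Delta$ of order at most $p^6$ is also correct.

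Part (ii), however, has a genuine gap in the $|T|=p^5$ branch, and it cannot be repaired. Two assertions fail there. First, ``part (i) applied to $eRe$ excludes $|eRe|=p^5$'' is not a contradiction: part (i) only says that $eRe$ is minimal simple as a ring with identity $e$, and since $e\neq 1_R$ this does not conflict with the minimality of $T$ among \emph{unital} subrings of $R$; that sub-branch actually produces $R\cong M_2(GF(p))\oplus\mathbb{Z}_p\oplus\mathbb{Z}_p$. Second, and decisively, the ``characteristic/minimality check'' that is supposed to single out $A\cong\mathbb{Z}_{p^2}$ from $R\cong M_2(GF(p))\oplus A$ with $|A|=p^2$ does not exist: $A$ can equally well be $GF(p^2)$, $\mathbb{Z}_p\oplus\mathbb{Z}_p$, or $\mathbb{Z}_p[t]/(t^2)$. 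For instance, $R=M_2(GF(p))\oplus GF(p^2)$ with $p\geq 5$ has order $p^6$ and non-solvable unit group, is not minimal simple (it properly contains the unital subring $M_2(GF(p))\oplus GF(p)$, whose unit group is non-solvable), and has $Jac(R)=0$, so it satisfies neither alternative of statement (ii). In other words, the statement is false as printed in exactly this branch, which is why no check can rescue your argument; the paper's own proof has the same blind spot, asserting without justification that $Jac(R)=0$ forces $R\cong M_n(GF(p))$ and thereby ignoring semisimple rings with more than one Wedderburn component. Your treatment of the $|T|=p^4$ branch (where $|Jac(R)|=p^2$ really is forced and the Wedderburn--Malcev splitting applies) is sound, modulo the details you flag.
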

    
    \proof

    If $|R|=p^4$, then $R\cong M_2(GF(p))$ and thus $R$ is minimal simple (if $p>2$) or solvable (if $p=2$). So suppose that $p^5 \leq |R|\leq p^6$.
    If $Jac(R)=0$, then $R\cong M_n(GF(p))$ for some integer $n>1$.
    Since  $|R|>p^4$, we have $n>2$, and so $|R|>p^9$, which is a contradiction. So we must have $Jac(R)\neq 0$.
    Let $I$ be a minimal  ideal of $R$ such that $I\subseteq Jac(R)$.
    Since $1+Jac(R)$ is a $p$-group, $(\frac{R}{Jac(R)})^*$ is not a solvable group, so there exists 
    $x\in R^*$ such that $o(x)=p^2-1$ and $F:=\langle x+Jac(R)\rangle \cup \{Jac(R)\}$ is a field. We have $F^*=\langle x+Jac(R)\rangle$.
    Let $a\in I\setminus\{0\}$. Then $\{a,xa,x^2a,...,x^{p^2-1}a,0\}\subseteq I$.
    If there exist integers $1\leq i<j\leq p^2-1$ such that 
    $x^ia=x^ja$, then $(x^i-x^j)a=0$.  There exists 
    $x^m+Jac(R)\in F$ such that $x^i-x^j+Jac(R)=x^m+Jac(R)$.
    Since $Jac(R)a=0$, there exist $y,y'\in Jac(R)$, such that $0=(x^i-x^j)a=(x^i-x^j+y)a=(x^m+y')a$.
    Hence $x^ma=0$, and so $x^{-m}x^ma=a=0$, which is a contradiction. It follows that $|I|\geq p^2$.  
    
    Suppose $|R|=p^5$. Since $(\frac{R}{Jac(R)})^*$ is not a solvable group, we have $|\frac{R}{I}|\geq |\frac{R}{Jac(R)}|\geq p^4$, and so
    $|R|\geq p^4|I|\geq p^6$, which is a contradiction. This proves (i).
    
    Suppose that $|R|=p^6$. 
     If $Jac(R)\neq I$, then there exists $b\in Jac(R)\setminus I$.
     So $\{b+I,xb+I,...,x^{p^2-1}b+I,I\}\subseteq \frac{Jac(R)}{I}$. 
     Therefore   $|\frac{Jac(R)}{I}|\geq p^2$, and so $|R|\geq p^4|Jac(R)|=p^8$, which is a contradiction. Thus $Jac(R)=I$. 
    If $|I|>p^2$, then $(\frac{R}{I})^*$ is a solvable group, which is  a contradiction. So $|I|=p^2$.  Hence $I=\{a,xa,x^2a,...,x^{p^2-1}a,0\}=\{a,ax,...,ax^{p^2-1},0\}$.
    Let $S$ be a unitary maximal subring of $R$ such that $S^*$ is not a solvable group. Since  $S$ is a minimal simple ring, then $Jac(S)=0$, and so
    $I\cap S=0$. Since $S$ is a maximal unitary subring, $S+I=R$. 
    Clearly $I$ is a $S$-module, so $R=S\oplus_S I$.
    If $I$ has proper $S$-submodule $D\neq 0$, then $S+D$ is a proper unitary subring of $R$ such that $(S+D)^*$ is not a solvable group.
    Clearly, $|D| \leq |Jac(S+D)|$. But by (i) $Jac(S+D)=0$, which is a contradiction. So $I$ does not have proper $S$-submodule, and so $I$ is a simple $S$-module. This proves (ii).
    \endproof
    
    \addvspace{\bigskipamount}

    \end{document}